\DeclareMathOperator*{\argmin}{arg\,min}
\newcommand{\IR}{{\mathbb{R}}}
\newcommand{\Ii}{{\mathfrak{i}}}
\newcommand{\diag}{{\rm{diag}}}
\newcommand{\bmt}{\left[ \begin{array}{ccccccccc}}
\newcommand{\emt}{\end{array}\right]}
\newcommand{\bean}{\begin{eqnarray*}}
\newcommand{\eean}{\end{eqnarray*}}
\newcommand{\bea}{\begin{eqnarray}}
\newcommand{\eea}{\end{eqnarray}}
\newcommand{\eq}{\begin{equation}\begin{array}{lllllllll}}
\newcommand{\ee}{\end{array}\end{equation}}
\newcommand{\eqn}{\begin{equation*}\begin{array}{lllllllll}}
\newcommand{\een}{\end{array}\end{equation*}}
\newtheorem{theorem}{Theorem}[section]
\newtheorem{remark}{Remark}[section]
\journal{Elsevier}
\begin{document}

	\begin{frontmatter}
	
	\title{A Vanka-type multigrid solver for complex-shifted Laplacian systems \\from diagonalization-based parallel-in-time algorithms
	}
	
	 	\author{Yunhui He}
	 \ead{yunhui.he@ubc.ca}  	
	 \address{Department of Computer Science, The University of British Columbia, Vancouver, BC, V6T 1Z4, Canada.}

	\author{Jun Liu\corref{mycorrespondingauthor}}
	\cortext[mycorrespondingauthor]{Corresponding author}
	\ead{juliu@siue.edu}  	
	\address{Department of Mathematics and Statistics, Southern Illinois University Edwardsville, Edwardsville, IL 62026, USA.\vspace{-1cm}}

	\begin{abstract}
		
 We propose and analyze a Vanka-type multigrid solver for solving a sequence of complex-shifted Laplacian systems arising in diagonalization-based parallel-in-time algorithms for evolutionary equations. Under suitable assumption, local Fourier analysis shows the proposed Vanka-type smoother achieves a uniform smoothing factor, which is verified by several numerical examples.
	\end{abstract}
	
	\begin{keyword}
 complex-shifted Laplacian \sep multigrid \sep Vanka smoother \sep    local Fourier analysis \sep   parallel-in-time \sep Helmholtz equation  
	\end{keyword}
	
\end{frontmatter}

\section{Introduction}
Time-dependent partial differential equations (PDEs) appear ubiquitously in science and engineering,
whose numerical simulations based on the sequential time-stepping schemes are very time-consuming.
With the popularity of massively parallel processors, in addition to spatial parallelism, many parallel-in-time (PinT) algorithms \cite{gander201550} have been developed for simulating time-dependent PDEs, which can provide significant speed up over the sequential time-stepping schemes. 
We will focus on a class of ParaDIAG algorithms \cite{gander2020paradiag}, which are built upon the  diagonalization of the time discretization matrix $B=VD V^{-1}$ with $D=\diag\{\lambda_1,\lambda_2,\cdots,\lambda_n\}$ being its (complex) eigenvalues. More specifically, 
by coupling all the time steps simultaneously upon appropriate discretization in space and time, we arrive at the following all-at-once  sparse linear system (from separable PDEs)
\bea  \label{BE-A}
 	\left(B\otimes I_h+ {I_t\otimes A}\right) \bm u=
(V\otimes I_h)\left(D\otimes I_h+ {I_t\otimes A}\right)(V^{-1}\otimes I_h) \bm u= \bm f,  
\eea
where  $I_h\in\IR^{m\times m},I_t\in\IR^{n\times n}$ are identity matrices and $A\in\IR^{m\times m}$ is the spatial discretization matrix.
The diagonalization of $B$ can be guaranteed by using a boundary-value method time scheme \cite{liu2021well} or its circulant-type approximation as preconditioners \cite{McDonald2018,lin2020all,WL2020SIMAX}.
Extensive numerical results \cite{GW19,gander2020paradiag} reveal that the ParaDIAG algorithms have a very promising parallel efficiency for both parabolic \cite{lin2020all} and hyperbolic PDEs \cite{gander2019direct,WL2020SIMAX}. 
Since $\left(D\otimes I_h+ {I_t\otimes A}\right)$ is block diagonal,
the major step of ParaDIAG algorithms is to solve in parallel $n$ independent complex-shifted linear systems  ($j=1,2,\cdots,n$)
\bea \label{shifted-Laplace-system}
L_j \bm z:=(A+\lambda_j I) \bm z=\bm b_j.
\eea
Such complex-shifted linear systems are still expensive to solve by direct methods, which motivates us to develop efficient  multigrid solver.
For simplicity, we will concentrate on complex-shifted Laplacian systems with $A$ derived from the five-point stencil $\frac{1}{h^2} \begin{bsmallmatrix}&-1&\\-1&4&-1\\&-1& \end{bsmallmatrix}$ based on finite difference scheme with a mesh step size $h$ on rectangular domains, but
our method can also be extended to a general elliptic spatial differential operator with finite element scheme on irregular domain.
 
Complex-shifted Laplacian multigrid preconditioner has been extensively studied \cite{Erlangga2006,Cools2013} for preconditioning Helmholtz equations, where the main efforts are devoted to determining the optimal shift \cite{Cocquet2017} for achieving better GMRES convergence rates. 
In \cite{Hocking2021}, the authors studied optimal complex relaxation parameters minimizing smoothing factors of multigrid with damped Jacobi smoother and
red-black successive over-relaxation (SOR) smoother for solving complex-shifted linear systems, which also inspires our current work.
One major difference of our considered problem from such Helmholtz equations is that our complex-shifts are given by the eigenvalues of the time discretization matrix $B$ that depends on the time step size (rather than the wavenumbers).

In this work, we generalized the additive element-wise  Vanka smoother  \cite{CH2021addVanka}
for solving the complex-shifted Laplacian systems from ParaDIAG algorithms. 
Under suitable conditions on the spatial and time step sizes, through  local Fourier analysis (LFA) techniques we proved that the optimal smoothing factor can be (approximately) achieved with the known optimal relaxation parameter. Rather than solving restricted subproblems in classical Vanka setting, we derived the explicit stencil of the Vanka smoother which can facilitate more efficient implementation.

The paper is organized as follows. In the next section we propose and analyze an element-wise additive Vanka smoother. In Section 3, we present some numerical examples with both forward problem (heat PDE) and inverse problem (backward heat conduction problem) to verify our theoretical results. Finally, some conclusions are made in Section 4.
\section{An additive Vanka-type multigrid solver}

We adapt the additive element-wise Vanka smoother  proposed in  \cite{CH2021addVanka}  originally for the Laplacian to solve \eqref{shifted-Laplace-system} within multigrid methods. Although Vanka-type smoothers are well-studied \cite{Vanka1986,delaRiva2019,Farrell2020,Claus2021}, there seems no study of additive Vanka smoother applied to complex-shifted Laplacian yet.   The additive element-wise Vanka smoother for  \eqref{shifted-Laplace-system} has  the following parallelizable form 
\begin{equation}
	M_{e,j} := \sum_{i=1}^{N} R_i^T W_i L_{i,j}^{-1} R_i,
\end{equation} 
where $W_i$ is a weighting matrix, and $L_{i,j}$ is the coefficient matrix of $i$-th subproblem, and $R_i$ is a restriction operator mapping the global vector to the $i$-th subproblem.
The error propagation operator of relaxation scheme is  $S_j = I-\omega M_{e,j} L_j$,  where  $\omega$ is a relaxation parameter to be determined.   Here, we apply LFA to help us identify or approximate the optimal relaxation parameter $\omega$. LFA is a  powerful tool to predict and analyze multigrid convergence performance \cite{wienands2004practical,trottenberg2000multigrid}.

\subsection{Local Fourier analysis}

We consider standard coarsening,  and the low and high frequencies are given by  ${\boldsymbol{\theta}=(\theta_1,\theta_2) }\in  T^{\rm{L}} =\left[-\frac{\pi}{2}, \frac{\pi}{2}\right)^2$, $\boldsymbol{\theta} \in  T^{\rm{H}} =\left[-\frac{\pi}{2}, \frac{3\pi}{2}\right)^2 \setminus T^{\rm{L}}$, respectively.  To analyze multigrid performance, we can examine the smoothing factor of the relaxation error operator $S_j$, which offers a sharp prediction of actual multigrid performance. 
We define the LFA smoothing factor for   $S_j$ as
\begin{equation}
	\mu_{\rm loc}(S_j) := \max_{\boldsymbol{\theta} \in T^{\rm{H}}}\{\rho(\widetilde{S}_j(\boldsymbol{\theta}))\},
\end{equation}
where {the matrix} $\widetilde{S}_j(\boldsymbol{\theta})$ is the symbol of $S_j$ and $\rho(\widetilde{S}_j(\boldsymbol{\theta}))$ standards for its spectral radius.  Since $\mu_{\rm loc}(S_j)$ is a function of $\omega\in\IR$, we can minimize $\mu_{\rm loc}(S_j)$ to obtain fast convergence. We define
\begin{equation}
	\mu_{\rm opt} :=\min_{\omega}\mu_{\rm loc}(S_j),\qquad {\omega_{\rm opt}:=\argmin_{\omega} \mu_{\rm loc}(S_j)}.
\end{equation}
Let $\eta_j=\lambda_jh^2$. Following \cite{CH2021addVanka}, it can be shown that the  stencil of the element-wise patch $M_{e,j}$ is  
	\begin{equation}\label{eq:general-stencil-element}
		M_{e,j} = \frac{h^2}{4} 
		\begin{bmatrix}
			c & 2b  &c\\
			2b & 4a  &2b\\
			c & 2b  &c
		\end{bmatrix},
	\end{equation} 
	where  $h$ is the spatial mesh step size and 
	\begin{eqnarray*}
		a=\frac{1}{4}\left(\frac{1}{2+\eta_j}+\frac{2}{4+\eta_j}+\frac{1}{6+\eta_j}\right),\,\,
		b=\frac{1}{4}\left(\frac{1}{2+\eta_j}-\frac{1}{6+\eta_j}\right),\,\,
		c=\frac{1}{4}\left(\frac{1}{2+\eta_j}-\frac{2}{4+\eta_j}+\frac{1}{6+\eta_j}\right).
\end{eqnarray*}	
	 With the  stencil \eqref{eq:general-stencil-element}, we can explicitly form the global sparse smoother matrix rather than solving each subproblems in the usual Vanka setting. Furthermore, the symbols of $L_j$ and $M_{e,j}$ are 
	\begin{equation*}
		\widetilde{L}_j =\frac{1}{h^2}(4+\lambda_j h^2-2\cos \theta_1 -2\cos \theta_2),\quad  \widetilde{M}_{e,j} =h^2(a+b\cos \theta_1 +b\cos \theta_2 +c\cos\theta_1\cos \theta_2).
	\end{equation*}
	Thus, we get the following complicated symbol expression
	\begin{equation}\label{eq:smoothing-form}
		\widetilde{S}_j=1-\omega \widetilde{M}_{e,j} \widetilde{L}_j = 1-\omega (a+b\cos \theta_1 +b\cos \theta_2 +c\cos\theta_1\cos \theta_2)(4+\lambda_j h^2-2\cos \theta_1 -2\cos \theta_2).
	\end{equation}
With complex numbers $a,b,c$ and $\lambda_j$,  it is very difficult to pursue the analytically optimal smoothing factor for $S_j$ by {minimizing $\mu_{\rm loc}(S_j)$} over $\omega\in\IR$. Hence, we give an upper bound on the smoothing factor, which is close to the optimal smoothing factor that we numerically obtained from LFA. 
	
	When $\lambda_j =0$, we denote $M_{e,j}$ by  $M_0$. Here, we assume the shifts satisfy $\lambda_j=O(1/h)$, then $\lambda_j h^2=O(h)$ is very small as $h$ is refined. It follows that  $\widetilde{M}_{e,j} \widetilde{L}_j \approx \widetilde{M}_0 \widetilde{A}$, which is insensitive to the complex-shift $\lambda_j$. It is natural to have the following smoothing factor estimate result.

	\begin{theorem} \label{thm:general-upper-bound}
		Assume $\lambda_j=O(1/h)$ and define the smoothing operator $S_j = I-\omega M_{e,j} L_j$. Then,
		\begin{equation}\label{eq:upper-bound-smoothing}
			\mu_{\rm loc}(S_j) \leq \max_{\boldsymbol{\theta} \in T^{\rm{H}}} | 1- \omega  \widetilde{M}_0 \widetilde{A} |+\max_{\boldsymbol{\theta} \in T^{\rm{H}}} |\omega \lambda_j \widetilde{M}_{e,j} | {=:\phi_0(\omega)+\phi_j(\omega)}.
		\end{equation}
	\end{theorem}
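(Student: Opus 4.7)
The plan is to start from the explicit symbol expression
\[
\widetilde{S}_j(\boldsymbol{\theta}) = 1 - \omega\, \widetilde{M}_{e,j}\,\widetilde{L}_j
\]
given by \eqref{eq:smoothing-form}, and then to peel off the complex shift by writing $\widetilde{L}_j = \widetilde{A} + \lambda_j$, which holds because $L_j = A + \lambda_j I$. Substituting yields the clean split
\[
\widetilde{S}_j \;=\; \bigl(1 - \omega\, \widetilde{M}_{e,j}\, \widetilde{A}\bigr) \;-\; \omega \lambda_j\, \widetilde{M}_{e,j}.
\]

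Next, I would invoke the assumption $\lambda_j = O(1/h)$, which implies $\eta_j = \lambda_j h^2 = O(h)\to 0$. Since the stencil coefficients $a,b,c$ in \eqref{eq:general-stencil-element} are continuous (in fact rational) functions of $\eta_j$, they tend to their $\lambda_j = 0$ counterparts, and therefore $\widetilde{M}_{e,j} \to \widetilde{M}_0$ uniformly in $\boldsymbol{\theta}$. This justifies the replacement of $\widetilde{M}_{e,j}\,\widetilde{A}$ by $\widetilde{M}_0\,\widetilde{A}$ in the first bracket, as anticipated by the remark immediately preceding the theorem, giving
\[
\widetilde{S}_j \;\approx\; \bigl(1 - \omega\, \widetilde{M}_0\, \widetilde{A}\bigr) \;-\; \omega \lambda_j\, \widetilde{M}_{e,j}.
\]

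Finally, applying the triangle inequality pointwise in $\boldsymbol{\theta}$ and maximizing over $T^{\rm{H}}$ gives
\[
\mu_{\rm loc}(S_j) \;=\; \max_{\boldsymbol{\theta}\in T^{\rm{H}}} |\widetilde{S}_j(\boldsymbol{\theta})| \;\leq\; \max_{\boldsymbol{\theta}\in T^{\rm{H}}} \bigl|1 - \omega\, \widetilde{M}_0\, \widetilde{A}\bigr| + \max_{\boldsymbol{\theta}\in T^{\rm{H}}} \bigl|\omega \lambda_j\, \widetilde{M}_{e,j}\bigr| \;=\; \phi_0(\omega) + \phi_j(\omega),
\]
which is precisely \eqref{eq:upper-bound-smoothing}.

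The main technical obstacle is to turn the replacement $\widetilde{M}_{e,j}\,\widetilde{A}\approx \widetilde{M}_0\,\widetilde{A}$ into a rigorous step rather than an asymptotic one. A literal triangle-inequality split would leave a residual $\omega(\widetilde{M}_{e,j}-\widetilde{M}_0)\widetilde{A}$ not appearing on the right-hand side of \eqref{eq:upper-bound-smoothing}. The natural remedy is to control this residual directly from the closed-form expressions for $a,b,c$: each has a factor $\eta_j$ in the numerator once one expands the differences $\frac{1}{2+\eta_j}-\frac{1}{2}$ etc., so $|\widetilde{M}_{e,j}-\widetilde{M}_0|=O(\eta_j)=O(h)$ uniformly in $\boldsymbol{\theta}$, while $|\widetilde{A}| = O(h^{-2})$, giving a residual of size $O(|\omega| h^{-1})$ that is dominated by $\phi_j(\omega) = O(|\omega| \lambda_j) = O(|\omega|/h)$. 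Thus the bound should be interpreted as holding up to a term of the same order as $\phi_j(\omega)$, which matches the paper's description of \eqref{eq:upper-bound-smoothing} as close to, but not identical with, the LFA-optimal smoothing factor.
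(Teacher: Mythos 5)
Your argument is essentially the paper's own proof: the authors likewise write $S_j = I - \omega M_{e,j}A - \omega\lambda_j M_{e,j}$ and apply the triangle inequality, silently replacing $\widetilde{M}_{e,j}$ by $\widetilde{M}_0$ in the first term, so you have correctly identified (and made explicit) the one step the paper glosses over, namely controlling the residual $\omega(\widetilde{M}_{e,j}-\widetilde{M}_0)\widetilde{A}$. One bookkeeping correction: since $\widetilde{M}_{e,j}=h^2(a+b\cos\theta_1+b\cos\theta_2+c\cos\theta_1\cos\theta_2)$ carries an explicit factor $h^2$, the residual is $O(|\omega|\,h)$ and $\phi_j(\omega)=O(|\omega|\,\lambda_j h^2)=O(|\omega|\,h)$, not $O(|\omega|/h)$ as you wrote --- both vanish as $h\to 0$, which only strengthens your conclusion and agrees with the paper's remark that $\phi_j(\omega)=O(h)$.
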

	\begin{proof}
	With	$S_j=  I-\omega M_{e,j} A- \omega \lambda_j M_{e,j}$, the conclusion obviously follows from the inequality
		\begin{equation*}
			| \widetilde{S}_j | = |1- \omega \widetilde{M}_e \widetilde{A}  - \omega \lambda_j   \widetilde{M}_{e,j} | \leq | 1- \omega   \widetilde{M}_0 \widetilde{A}|+|\omega \lambda_j  \widetilde{M}_{e,j} |. \qedhere
		\end{equation*}
	\end{proof}	
	In \cite{CH2021addVanka}, it was shown that {$\min_{\omega} \phi_0(\omega)=\frac{7}{25}\approx 0.280$} with $\omega=\frac{24}{25}$. {The second term $\phi_j(\omega)$ in (\ref{eq:upper-bound-smoothing}) can be ignored since $\phi_j(\omega)=O(\lambda_j h^2)=O(h) \ll 0.280$}. In other words,  the optimal smoothing factor for $\mu_{\rm loc}(S_j)$ in Theorem \ref{thm:general-upper-bound}  is about $0.280$. From our LFA numerical tests, it indeed shows that $\omega=\frac{24}{25}$  gives the ({approximately}) optimal smoothing factor. 
	{We highlight that the above estimate may become less useful without the key assumption $\lambda_j=O(1/h)$, which is beyond our scope.}

{As is well-known that LFA smoothing factor often offers a sharp prediction of two-grid convergence factor.   Let  $\nu$ be the number of smoothing steps in multigrid.  We  numerically optimize the LFA two-grid convergence factor \cite{wienands2004practical,trottenberg2000multigrid} with  $\nu=1$}, and then use this optimal parameter  to test LFA two-grid convergence factor as a function of  $\nu$, shown in Table \ref{tab:mu-rho-results-256}. For comparison, we also include  damped Jacobi smoother.  From Table \ref{tab:mu-rho-results-256}, we see that $\rho(1)=\mu_{\rm opt}$ and the optimal parameter is the same as we predicted $\omega=\frac{24}{25}$  from standard Laplacian without shifts, and the Vanka smoother significantly outperforms the Jacobi smoother, which are also confirmed by numerical examples.
	
	\begin{table}[H] 
		\caption{LFA predicted two-grid convergence factor, $\rho(\nu)$,  and smoothing factor with  optimal parameter $\omega$  that minimizes $\rho(1)$ obtained from LFA for Example 1 (Heat equation) with  $\lambda_1$, the first eigenvalue of $B$. $h=\tau=\frac{1}{256}$.}
		\centering
		\begin{tabular}{|l||c|c|cccc|}
			\hline
			&$\omega_{\rm opt}$  & $\mu_{\rm opt}$  &$\nu=1$  & $\nu=2$   &$\nu=3$   & $\nu=4$  \\ \hline
			$\rho_{\mathrm{Jacobi}}$     &0.80        &0.600      &0.600       &0.360      & 0.216   & 0.137   \\ \hline  
			$\rho_{\mathrm{Vanka}}$   &0.96        &0.280      &0.280     &0.116      &0.082   &0.064   \\ \hline  
			
		\end{tabular}\label{tab:mu-rho-results-256}
	\end{table}
	
	\begin{remark}
		The optimal parameter $\omega_{\rm opt}$ and smoothing factor $\mu_{\rm opt}$ given in Table \ref{tab:mu-rho-results-256}  {work}   uniformly   for all eigenvalues of $B$ and leads to same convergence factors. We omit the duplicated results here. 
	\end{remark}

\section{Numerical examples}
In this section, we present several numerical examples (on a unit square domain) to illustrate the effectiveness of our method. All simulations are  implemented with MATLAB on {a Dell Precision 5820 Workstation with Intel(R) Core(TM) i9-10900X CPU@3.70GHz and 64GB RAM},
where the serial CPU times (in seconds) are estimated by the timing functions \texttt{tic/toc}.
In our multigrid solver, we use coarse operator from re-discretization with $2h$, full weighting restriction and linear interpolation operators, W cycle with 1-pre and no post smoothing iteration, coarsest mesh step size $h_0=1/8$, and stopping tolerance $10^{-8}$ based on reduction in relative residual norms. 
We compare the damped \textit{Jacobi} smoother with $\omega=4/5$
and our \textit{Vanka} smoother with $\omega=24/25$. Since our used time schemes are unconditionally stable, we will simple choose the time step size $\tau=h$. 
\subsection{Example 1 (Heat equation).}
In our first example, we test a PinT direct solver \cite{liu2021well} for solving 2D heat equation, where the time discretization matrix $B$ (with a time step size $\tau=1/n$) is given by
\begin{equation}\label{B} \small
	B=\frac{1}{\tau}\begin{bmatrix}
		0 &\frac{1}{2} & & & \\
		-\frac{1}{2} &0 &\frac{1}{2}  & & \\
		&\ddots &\ddots &\ddots &\\
		&  &-\frac{1}{2}  &0   &\frac{1}{2}\\
		&  &   &-1   &1\\
	\end{bmatrix}\in\IR^{n\times n}. 
\end{equation}
It was shown in \cite{liu2021well} that  $|\lambda_j|<(n+\sqrt{n}/\sqrt{2})$ for all $j$, which implies $\lambda_j=O(1/h)$ since $h=\tau=1/n$.
Figure \ref{ex1_fig1} shows our proposed Vanka smoother is about 3 times faster than the Jacobi smoother, where the observed uniform convergence rates match well with the LFA prediction in Table \ref{tab:mu-rho-results-256}.
 \begin{figure}[htp!]
	\centering
	\includegraphics[width=0.49\textwidth,frame]{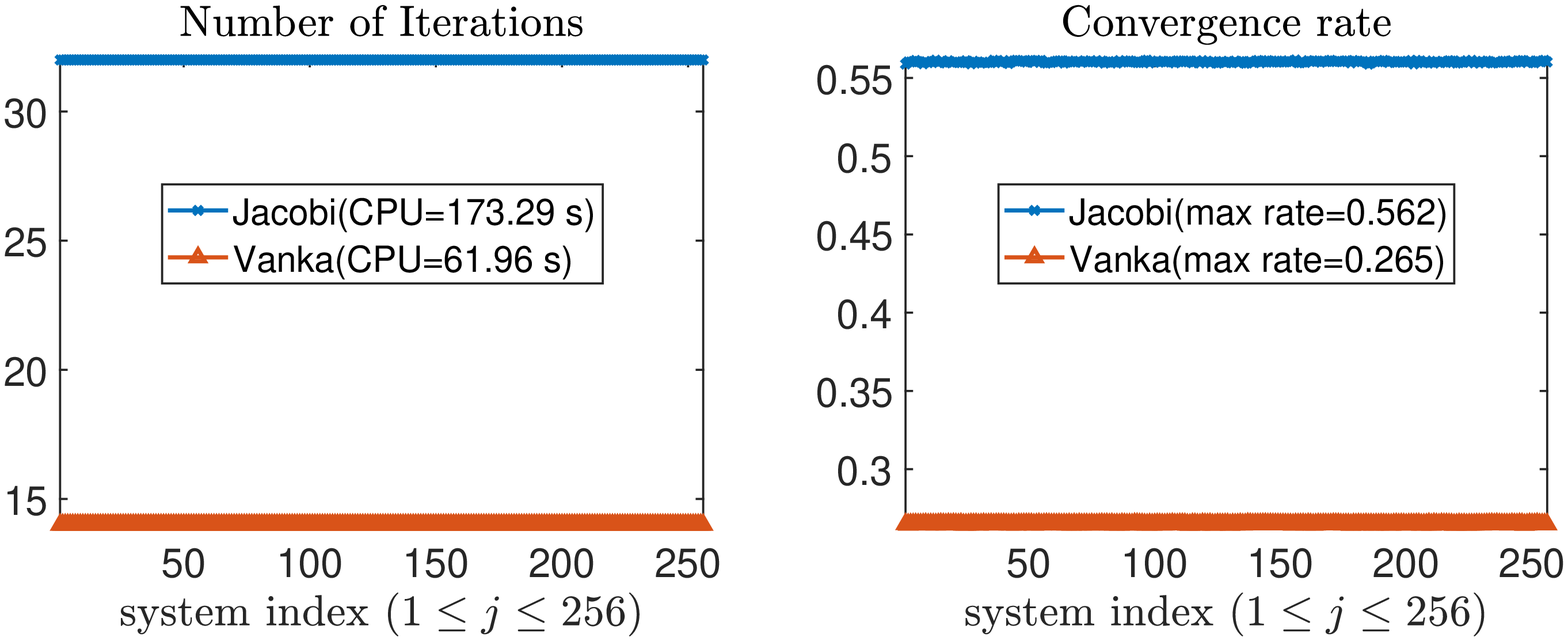}  
	\includegraphics[width=0.49\textwidth,frame]{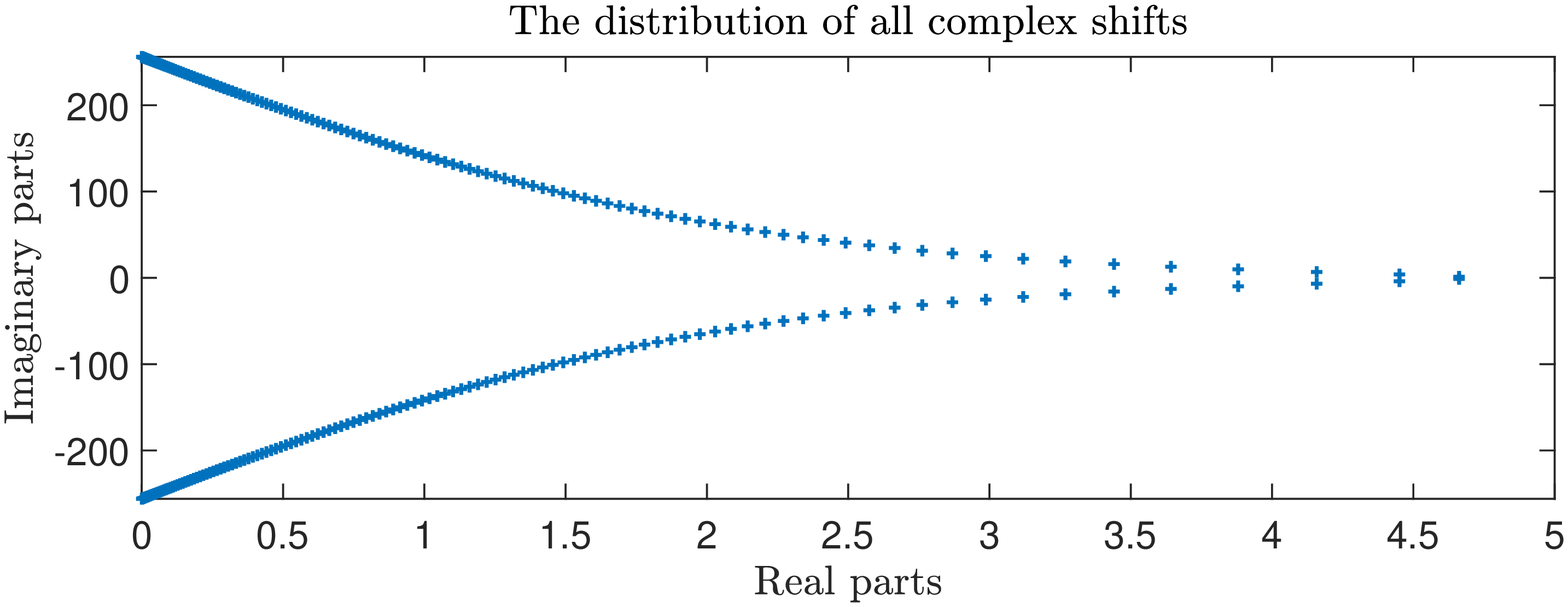}  
	\caption{Ex 1. Left plot: comparison of W-cycle iteration numbers and convergence rates with respect to different shifts ($h=1/256,\tau=1/256$); Right plot: distribution of complex shifts (or eigenvalues of  $B$).} \label{ex1_fig1}
\end{figure} 
  
\subsection{Example 2 (Backward heat conduction problem).}
In our second example, we test a PinT quasi-boundary value method  \cite{liu2021fast} for backward heat conduction problem, where the time discretization matrix $B$ (with a time step size $\tau=1/n$) reads
\begin{equation}\label{B2} \small
	B=\frac{1}{\tau}\bmt
	1& 0 &\cdots &0 &1/\beta\\ 
	-1 &  1  & \cdots &0 &0\\
	\vdots&\ddots &\ddots &\ddots &\vdots\\
	0& 0 &-1& 1& 0\\
	0&0 & 0 & -1 &1
	\emt\in\IR^{(n+1)\times (n+1)}, 
\end{equation}
where  $\beta=\delta>0$ is a small regularization parameter determined by the noise level $\delta=0.01$. Figure \ref{ex2_fig1} shows the same convergence rates, although the eigenvalues of $B$ leads to very different shifts.
 \begin{figure}[htp!]
	\centering
	\includegraphics[width=0.49\textwidth,frame]{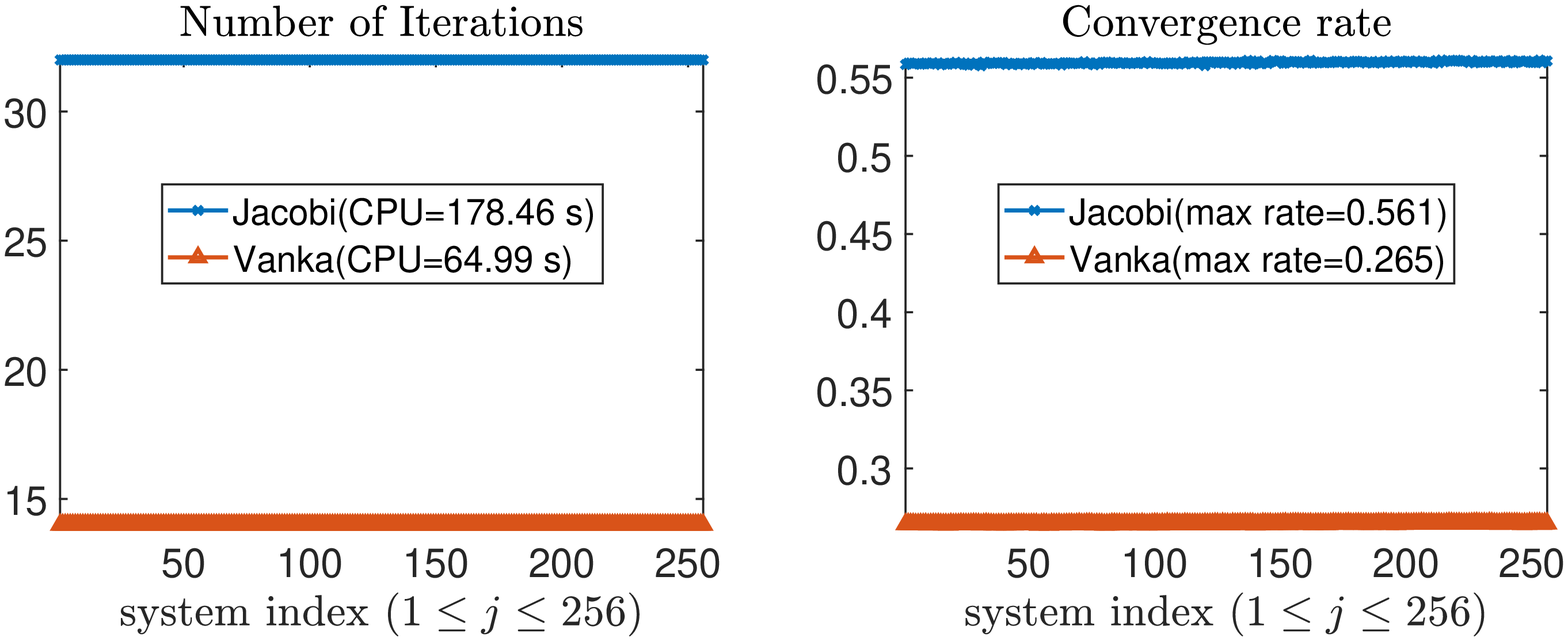}  
	\includegraphics[width=0.49\textwidth,frame]{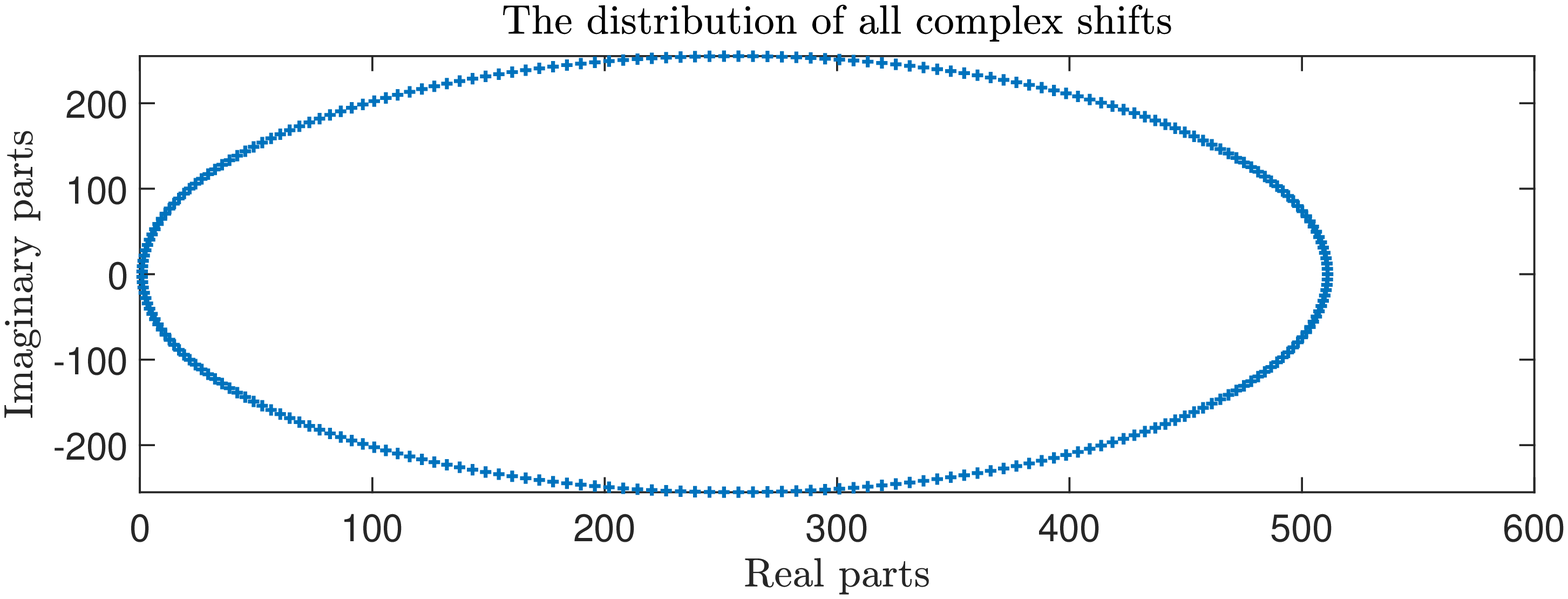}  
	\caption{Ex 2. Left plot: comparison of W-cycle iteration numbers and convergence rates with respect to different shifts ($h=1/256,\tau=1/256$); Right plot: distribution of complex shifts (or eigenvalues of  $B$).} \label{ex2_fig1}
\end{figure} 
\subsection{Example 3 (Isotropic Helmholtz equation).}
In our third example, we test a sequence of 2D isotropic Helmholtz equation
with the shifts given by $\lambda_j=-j^2(1-0.5\Ii)$ with wavenumbers $j=1,2,\cdots,128$ satisfying $jh\le 1/2$ to avoid the pollution effect \cite{Hocking2021}. Here $\Ii=\sqrt{-1}$  denotes the imaginary unit.
Let `Jacobi-C' denotes the damped Jacobi smoother with optimal complex relaxation parameter as proposed in \cite{Hocking2021}.
Figure \ref{ex3_fig1} shows our proposed Vanka smoother is about 3 times faster than both Jacobi and Jacobi-C smoother, where the Jacobi-C smoother outperforms the Jacobi smoother for large wavenumbers.
The gradual deterioration of convergence rates for larger shifts is expected due to their negative real parts.
The advantage of Jacobi-C smoother indicates our Vanka smoother may be further improved with carefully chosen complex relaxation parameter, which will be left as our future work.
 \begin{figure}[htp!]
	\centering
	\includegraphics[width=0.49\textwidth,frame]{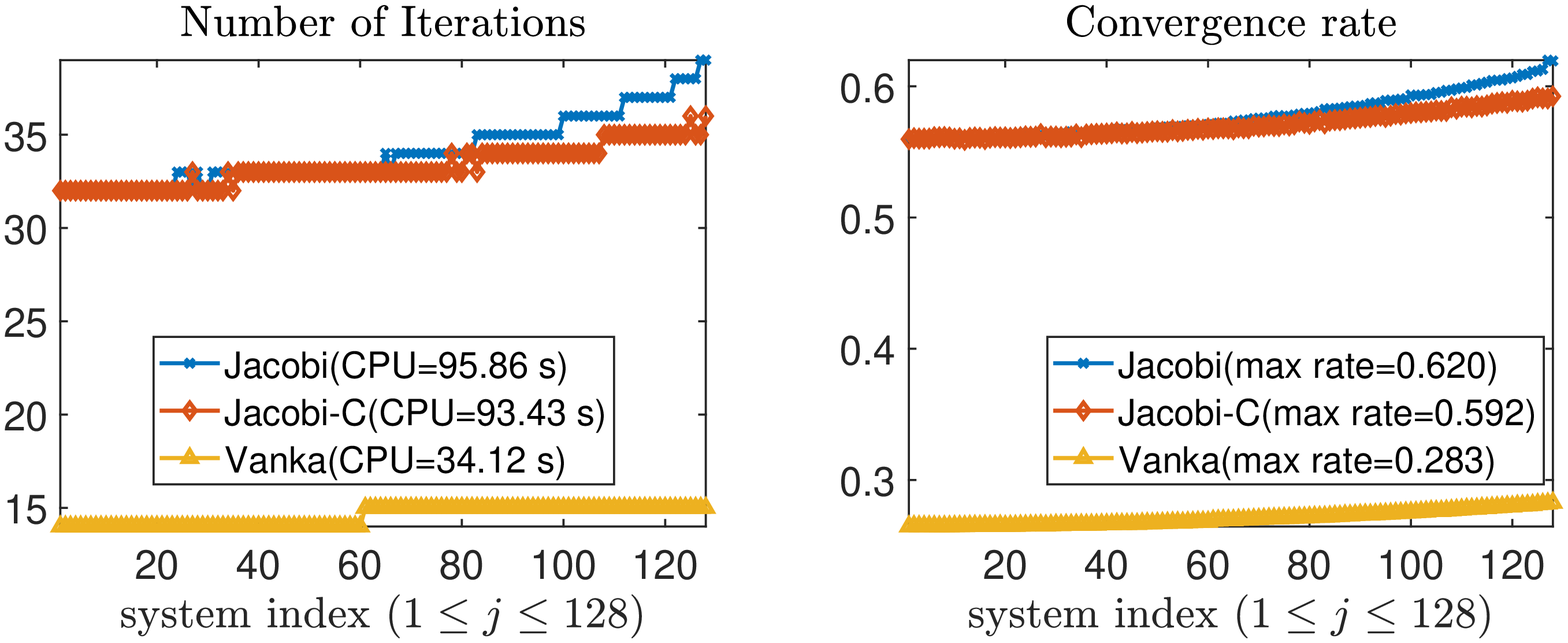}  
	\includegraphics[width=0.49\textwidth,frame]{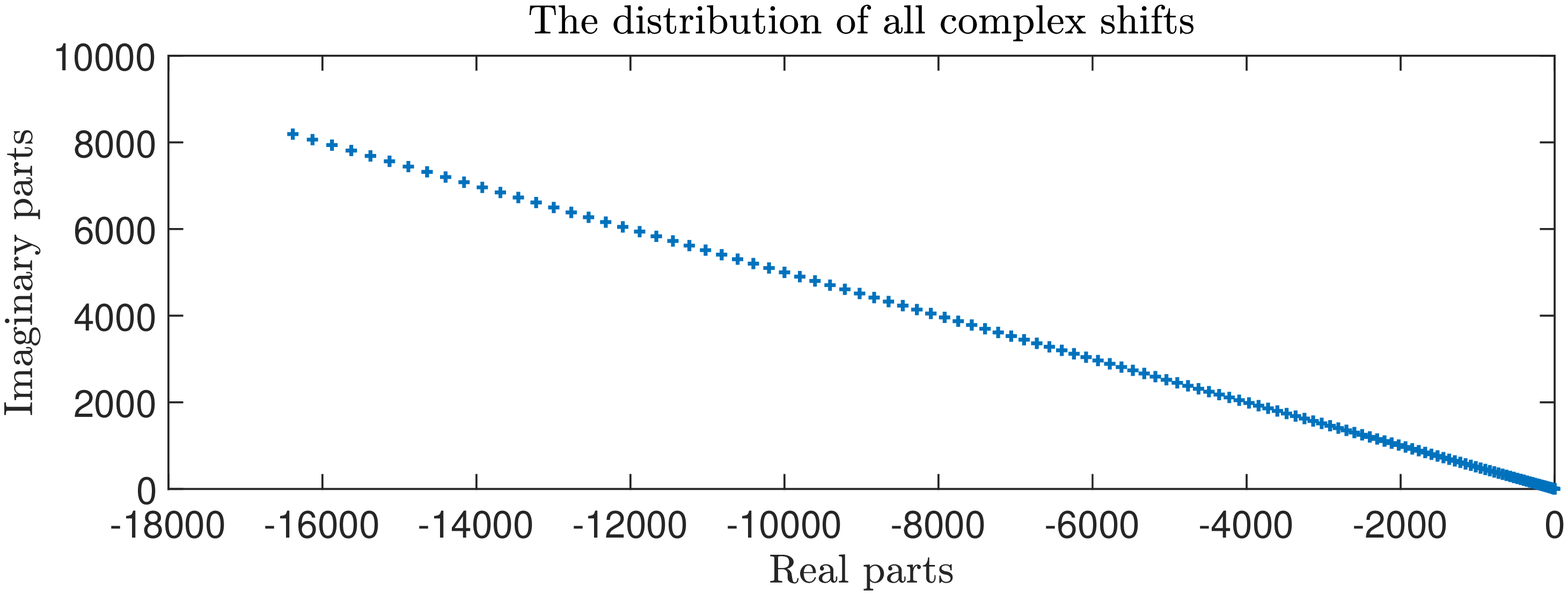}  
	\caption{Ex 3. Left plot: comparison of W-cycle iteration numbers and convergence rates with respect to different shifts ($h=1/128$); Right plot: distribution of complex shifts.} \label{ex3_fig1}
\end{figure}
\section{Conclusions}
 In this paper we have proposed and analyzed an additive element-wise Vanka smoother for complex-shifted Laplacian systems that stem from a class of diagonalization-based PinT algorithms.
 Numerical results with various applications confirmed our theoretical outcomes based on LFA techniques.
 It is also possible to generalize our method to similar complex-shifted linear systems  arising in the Laplace transform-based parallelizable contour integral method (see e.g., \cite{sheen2003parallel}). 
{\small
\bibliographystyle{elsarticle-num}
\bibliography{LFA,ISPpint,DirectPinT,inversePDE,waveControl,waveControl2}
}

\end{document}